\documentclass{amsart}

\usepackage{bbm}

\newcommand{\bR}{\mathbb{R}}

\usepackage{lmodern}
\usepackage[dvipsnames,svgnames,x11names,hyperref]{xcolor}
\usepackage{mathtools,url,graphicx,verbatim,amssymb,enumerate,stmaryrd,nicefrac}
\usepackage[pagebackref,colorlinks,citecolor=Mahogany,linkcolor=Mahogany,urlcolor=Mahogany,filecolor=Mahogany]{hyperref}
\usepackage{microtype}
\usepackage{tikz, tikz-cd}
\usetikzlibrary{matrix,calc,arrows,patterns,decorations.pathreplacing, decorations.markings}
\usepackage{caption}
\captionsetup{
	font=small}
	
\AtBeginDocument{%
   \def\MR#1{}}

\newtheorem{theorem}{Theorem}

\newtheorem*{corollary*}{Corollary}

\theoremstyle{definition}

\theoremstyle{remark}

\newtheorem{remark}[theorem]{Remark}
\newtheorem*{remark*}{Remark}

\newtheorem{question}[theorem]{Question}

\newcommand{\mr}[1]{{\rm #1}}

\tikzset{
  symbol/.style={
    draw=none,
    every to/.append style={
      edge node={node [auto=false]{$#1$}}}
  }
}


\setcounter{tocdepth}{1}

\title{There is no topological Fulton--MacPherson compactification}

\author{Alexander Kupers}
\email{a.kupers@utoronto.ca}
\address{Department of Computer and Mathematical Sciences \\ 
	University of Toronto Scarborough \\
	1265 Military Trail, Toronto, ON M1C 1A4 \\
	Canada}

\begin{document}

\begin{abstract}In this note, we prove that the Fulton--MacPherson compactification of configuration spaces of smooth manifolds can not be extended to topological manifolds in a natural manner, using recent work of Chen and Mann.\end{abstract}

\maketitle 

For any manifold $M$, the configuration space of $k$ ordered points in $M$ is given by
\[\mr{Conf}_k(M) \coloneqq \{(m_1,\ldots,m_k) \mid m_i \neq m_j \text{ if $i \neq j$}\} \subset M^k.\]

If $M$ is smooth, there exist \emph{Fulton--MacPherson compactifications} $\mr{FM}_k(M)$ for $k \geq 0$ \cite{Sinha} (first defined in \cite{AxelrodSinger}) with the following properties:
\begin{enumerate}[(i)]
	\item $\mr{FM}_k(M)$ is a smooth manifold with corners.
	\item The interior of $\mr{FM}_k(M)$ is $\mr{Conf}_k(M)$.
	\item The action of diffeomorphisms of $M$ on $\mr{Conf}_k(M)$ extends to $\mr{FM}_k(M)$.
	\item $\partial \mr{FM}_2(M)$ is diffeomorphic to the sphere bundle $STM$ of the tangent bundle, equivariantly for diffeomorphisms of $M$.
\end{enumerate}
These are but a few of its useful properties. For example, the collection of spaces $\{\mr{FM}_k(\bR^d)\}_{k \geq 0}$ can be assembled to an operad so that $\{\mr{FM}_k(M)\}_{k \geq 0}$ forms a right module over it. This is used in the proofs of various formality results, and constructions of configuration space integrals \cite{Kontsevich}. To understand the dependence of such results on a smooth structure, we are led to ask:

\begin{question}Does the Fulton--MacPherson compactification extend to topological manifolds?\end{question}

The following result tells us the answer is negative for any sufficiently natural extension.

\begin{theorem} When $M$ is a connected non-empty smooth manifold of dimension $d \geq 2$, the action of homeomorphisms of $M$ on $\mr{Conf}_2(M)$ does not extend to $\mr{FM}_2(M)$ compatibly with the action of diffeomorphisms.\end{theorem}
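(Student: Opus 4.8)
The plan is to assume an extension exists and to distil from it a homomorphism on a fibre sphere that cannot exist for $d\geq2$. First I would note that any self-homeomorphism of a manifold with boundary preserves the boundary, so the hypothetical extension produces an action of $\mr{Homeo}(M)$ on $\partial\mr{FM}_2(M)\cong STM$. The canonical map $c\colon\mr{FM}_2(M)\to M\times M$ restricts to the inclusion on the dense subset $\mr{Conf}_2(M)$, on which the action is the diagonal one, so $c$ is $\mr{Homeo}(M)$-equivariant everywhere by continuity; composing with a coordinate projection shows the action on $STM$ covers the tautological action on $M$ along $\pi\colon STM\to M$. By property (iv), restricting this action to $\mr{Diff}(M)$ recovers the standard action on $STM$, which on the fibre $S_mTM$ over a fixed point $m$ is the projectivised derivative $[v]\mapsto[D_m\phi\,v]$.

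Next I would fix a point, take a chart identifying it with $0\in\bR^d$, and study the resulting homomorphism $\alpha\colon\mr{Homeo}(M,0)\to\mr{Homeo}(S_0)$ recording how the stabiliser of $0$ acts on the fibre $S_0\cong S^{d-1}$ (equivariance of $c$ guarantees the stabiliser preserves $S_0=c^{-1}(0,0)\cap STM$). The crux is a locality statement: $\alpha(\phi)$ depends only on the germ of $\phi$ at $0$. To see it, suppose $\phi$ is the identity on a neighbourhood $V$ of $0$; then $\phi$ fixes $\mr{Conf}_2(V)$ pointwise, and since $\mr{Conf}_2(V)$ is open in $\mr{FM}_2(M)$ with closure containing all of $STM|_V$ (visible in the local blow-up model), continuity forces $\phi$ to fix $S_0$ pointwise, i.e.\ $\alpha(\phi)=\mr{id}$. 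Applying this to $\phi_1\phi_2^{-1}$ shows $\alpha(\phi_1)=\alpha(\phi_2)$ whenever $\phi_1,\phi_2$ have the same germ at $0$.

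The contradiction would then come from over-constraining $\alpha$: it must be at once germ-local and, on smooth germs, the projectivised derivative. I would pick compactly supported diffeomorphisms $g_A,g_B$ that near $0$ equal the linear expansions $A=2\,\mr{id}$ and a non-scalar orientation-preserving expansion $B=\mr{diag}(2,3,2,\ldots,2)$, together with a homeomorphism $h$ of $\bR^d$ fixing $0$ with $h\circ A=B\circ h$ near $0$. Such an $h$ exists by the classical topological classification of linear maps—any two orientation-preserving linear expansions of $\bR^d$ are topologically conjugate, the conjugacy being assembled by interpolating the direction monodromy across a fundamental annulus—but it is crucially \emph{not} differentiable at $0$. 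Realising the germ of $h$ by a compactly supported homeomorphism and using that $\alpha$ is a germ-local homomorphism gives
\[\bar B=\alpha(g_B)=\alpha(h)\,\alpha(g_A)\,\alpha(h)^{-1}=\alpha(h)\,\bar A\,\alpha(h)^{-1}=\mr{id},\]
since $\bar A=\overline{2\,\mr{id}}=\mr{id}$, whereas $\bar B\colon[v]\mapsto[Bv]$ is nontrivial as $B$ is not scalar.

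The hard part is precisely this last step: producing a conjugacy that exists topologically but not smoothly and knowing it interacts with a genuinely germ-local fibre action. This is the mechanism by which the rigidity of homeomorphism groups in the work of Chen and Mann obstructs a ``topological derivative,'' and it is where the hypothesis $d\geq2$ enters, since for $d=1$ every linear expansion acts trivially on $S^0$ and no contradiction arises. I would finally check the two routine points used above—equivariance of $c$ and the inclusion $STM|_V\subset\overline{\mr{Conf}_2(V)}$—directly from the local blow-up description of $\mr{FM}_2(M)$.
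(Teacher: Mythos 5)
Your argument is correct, but it takes a genuinely different route from the paper. The paper deduces the theorem in three lines from Chen--Mann: any orbit of $\mr{Homeo}(M)$ on a finite-dimensional space is a cover of a configuration space of $M$, hence has dimension $rd$, which cannot equal $\dim STM = 2d-1$ when $d \geq 2$; the transitive action on $\partial\mr{FM}_2(M) \cong STM$ is therefore impossible. You instead reprove the relevant rigidity by hand: equivariance of $c$ plus closedness of fixed-point sets of continuous self-maps of the Hausdorff space $\mr{FM}_2(M)$ gives germ-locality of the fibre action $\alpha$ (since the fixed set contains $\overline{\mr{Conf}_2(V)} \supset STM|_V$), and then the topological but non-smooth conjugacy between the expansions $A = 2\,\mr{id}$ and $B=\mr{diag}(2,3,2,\ldots,2)$ forces the nontrivial projectivised action of $B$ on $S^{d-1}$ to be trivial, since $h g_A h^{-1}$ and $g_B$ have the same germ at $0$. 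All the main steps check out. The one point to flesh out is realising the germ of $h$ by a compactly supported homeomorphism: in the fundamental-annulus construction you can take the unit sphere as the inner boundary of both fundamental domains and arrange the conjugacy to restrict to the identity there, so that extending inward by equivariance and outward by the identity gives a homeomorphism supported in the unit ball; without such care this step would invoke the annulus theorem or local contractibility of homeomorphism groups. As for what each approach buys: the paper's proof is far shorter but rests on a deep black box, whereas yours is elementary and self-contained, makes the obstruction concrete (there is no germ-local ``topological derivative''), and shows directly where the hypothesis $d\geq 2$ enters.
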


\begin{proof}If it did, we would get an action of $\mr{Homeo}(M)$ on $\partial \mr{FM}_2(M)$. By (ii), this is an extension of the transitive action of diffeomorphisms of $M$ on the $(2d-1)$-dimensional manifold $STM$. Hence there would be a transitive action of $\mr{Homeo}(M)$ on $STM$. By \cite[Theorem 1.1]{ChenMann}, the orbits of $\mr{Homeo}(M)$ acting on any finite-dimensional CW-complex must be covers of configuration spaces of $M$. These have dimension $rd$ for $r \geq 0$, and since $rd \neq 2d-1$ when $d \geq 2$ we get a contradiction.\end{proof}

\begin{remark}One could ask whether there exists an extension of the Fulton--MacPherson compactification to PL manifolds. We believe this is unlikely, but the above argument does not apply since \cite{ChenMann} does not apply to PL homeomorphisms.\end{remark}

\begin{remark}This argument also proves that in dimension $\geq 2$ it is impossible to give a construction of the spherical tangent bundle of a topological manifold which is natural in homeomorphisms.\end{remark}

\begin{remark}For $d=1$, such compactifications do exist. For example, $\mr{FM}_k(S^1) \cong S^1 \times W_k$ with $W_k$ an cyclohedron \cite[p.~5249]{BottTaubes} and the action on $\mr{FM}_k(S^1)$ by the group of diffeomorphism of the circle extends to its group of homeomorphisms $\mr{Homeo}(S^1) \cong \mr{O}(2) \ltimes \mr{Homeo}_\partial([0,1])$.\end{remark}

\bibliographystyle{amsalpha}
\bibliography{../refs2}

\vspace{+0.2cm}
\end{document}